\newtheorem{theorem}{Theorem}[section]
\newtheorem{corollary}[theorem]{Corollary}
\newtheorem{proposition}[theorem]{Proposition}
\theoremstyle{definition}
\newtheorem{example}[theorem]{Example}
\numberwithin{equation}{section}
\title{Region crossing change on planar trivalent graphs}
\author{Zhiyun Cheng}
\address{Laboratory of Mathematics and Complex Systems, School of Mathematical Sciences, Beijing Normal University, Beijing 100875, China}
\email{czy@bnu.edu.cn}
\subjclass[2020]{05C15, 05C10, 57K10}
\keywords{region crossing change, planar trivalent graph}
\begin{document}
\begin{abstract}
In this short note, we investigate the effect of region crossing change on planar trivalent graphs.
\end{abstract}
\maketitle
\section{Introduction}\label{section1}
A \emph{hypergraph} $H$ consists of a finite set $V=\{v_1, \cdots, v_n\}$ and a family of hyperedges $E=\{e_1, \cdots, e_m\}$, where each $e_i$ is a subset of $V$. For a given hypergraph $H$, the \emph{incidence matrix} of $H$ is a $m\times n$ matrix $M_H=(a_{ij})_{m\times n}$, where
\begin{center}
$a_{ij}=\begin{cases}
1& \text{if $v_j\in e_i$};\\
0& \text{otherwise.}
\end{cases}$
\end{center}
In this note, we are concerned with the rank of this matrix. Throughout this note, all the matrices are considered as elements of $M_{m\times n}(\mathbb{Z}_2)$, which denotes the collection of all $m\times n$ matrices over $\mathbb{Z}_2=\mathbb{Z}/2\mathbb{Z}$. Here we have two examples.

\begin{example}\label{example1.1}
A \emph{signed graph} is a graph $G$ together with a signature, i.e. a function $\sigma: E(G)\to \{0, 1\}$.\footnote{Usually, instead of $\{0, 1\}$, $\sigma(E(G))$ take values in $\{\pm1\}$. Here we make a bit of modification since we are working over $\mathbb{Z}_2$.} A \emph{vertex switching} at a vertex $v\in V(G)$ is an operation which switches all the signs of the edges incident with $v$. A classical result of Harary \cite{Har1955} states that a signed graph is vertex switching equivalent to a signed graph with all the edges colored by 0 if and only if it is balanced. Recall that we say a signed graph is \emph{balanced}, if for any circuit $C$ of $G$, the sum $\sum\limits_{e_i\in E(C)}\sigma(e_i)=0$ $($mod 2$)$.

A natural question arises: for a fixed underlying graph $G$, how many signed graphs are there modulo vertex switchings? This question can be reinterpreted from the viewpoint of hypergraph.

With a given graph $G$, one constructs a hypergraph $H$ as follows: each edge of $G$ corresponds to a vertex of $H$ and each vertex of $G$ corresponds to a hyperedge of $H$. More precisely, suppose that $E(G)=\{e_1, \cdots, e_n\}$, then $H$ has $n$ vertices $\{v'_1, \cdots, v'_n\}$. For a vertex $v_i\in V(G)$ incident with $e_{i_1}, \cdots, e_{i_s}$, we add a hyperedge $e'_i=\{v'_{i_1}, \cdots, v'_{i_s}\}$ to $H$. Notice that a signature $\sigma$ corresponds to a row vector $(\sigma(e_1), \cdots, \sigma(e_n))_{1\times n}$, now the effect of vertex switching on $v_{i_1}, \cdots, v_{i_k}$ can be directly read from the row vector $\sum\limits_{j=1}^kr_{i_j}$. Here $r_i$ denotes the row vector corresponding to $e'_i$ in $M_H$. It follows immediately that the number of non-equivalent signed graphs equals $2^{n-\text{rank}(M_H)}$. In \cite{NRS2015}, it was proved that for a loopless connected graph $G$, rank$(M_H)=|V(G)|-1$. It follows that the number of non-equivalent signed graphs equals $2^{|E(G)|-|V(G)|+1}$.

Note that in this case, 1 appears exactly twice in each column of $M_H$, since each edge has two endpoints.
\end{example}

\begin{example}\label{example1.2}
Consider a link diagram $D$ on the plane, a \emph{region crossing change} on a region $R\in R^2\backslash D$ defines a new link diagram obtained from $D$ by switching all the crossing points on the boundary of $R$. It was proved by Ayaka Shimizu \cite{Shi2014} that for any crossing point $c$ of a knot diagram, there exist some regions such that taking region crossing changes on these regions switches only $c$ but preserves all other crossing points. Obviously, this leads to the result that region crossing change is an unknotting operation for knot diagrams. However, this result does not hold for links with more than one components in general (consider the standard link diagram of Hopf link). For a given link diagram $D$, by ignoring all the crossing information one obtains a 4-valent planar graph, which is called the \emph{link projection} of $D$.

A natural question is, for a fixed link projection, how many link diagrams are there modulo region crossing changes? Once again, we can translate this question to another question which concerns the rank of the incidence matrix of a hypergraph.

Let us use $\{c_1, \cdots, c_n\}$ and $\{R_1, \cdots, R_m\}$ to denote the set of crossing points and the set of regions of a link projection, respectively. Now we define a hypergraph $H$, where each crossing $c_i$ corresponds to a vertex $v_i$ and each region $R_i$ corresponds to a hyperedge $e_i$, such that if $c_{i_1}, \cdots, c_{i_j}$ are on the boundary of $R_i$ then we set $e_i=\{v_{i_1}, \cdots, v_{i_j}\}$. Notice that each link diagram can be encoded into a row vector $M_{1\times n}(\mathbb{Z}_2)$\footnote{For example, one can fix an orientation for each knot component first, then use 0 and 1 to represent positive and negative crossing points respectively.} and the effect of taking region crossing changes on $R_{i_1}, \cdots, R_{i_k}$ can be easily read from the sum $\sum\limits_{j=1}^kr_{i_j}$. Here $r_i$ denotes the row vector of $M_H$ corresponding to $e_i$. It is routine to check that the number of non-equivalent link diagrams equals $2^{n-\text{rank}(M_H)}$. Therefore, in order to obtain the number of non-equivalent link diagrams, it suffices to find out rank$(M_H)$, which has been calculated in \cite{CG2012}. Actually, if a non-split link diagram consisting of $c$ knot components has crossing number $n$, then we have rank$(M_H)=n-c+1$. By setting $c=1$, we recover Ayaka Shimizu's result mentioned above. Based on this, the author proved that region crossing change is an unknotting operation on a link diagram if and only if this link is proper, i.e. $\sum\limits_{j\neq i}lk(K_i, K_j)=0$ $($mod 2$)$ for each component $K_i$ \cite{Che2013}. In particular, it follows that whether region crossing change is an unknotting operation on a link diagram is independent of the choice of the diagram. For example, for any chosen link diagram of the Hopf link, one cannot transform it into a link diagram representing the trivial link via region crossing changes.

We remark that, if a link diagram is reduced, i.e. there is no nugatory crossing, then 1 appears exactly four times in each column of $M_H$, since locally each crossing point lies on the boundaries of four regions.
\end{example}

The purpose of this note is to investigate the effect of region crossing change on planar trivalent graphs. Suppose we are given a planar trivalent graph $G$ embedded on the plane. For simplicity, let us assume that $G$ is connected and it contains no bridge. As a corollary, $G$ is loopless and locally the three regions around each vertex are mutually distinct. Therefore each region is a $n$-gon for some positive integer $n$, and one of them is unbounded. If $n$ is even, then we say this region is an \emph{even} region, otherwise we call it an \emph{odd} region. By a \emph{state} of $G$, we mean a function $s: V(G)=\{v_1, \cdots, v_n\}\to\{0, 1\}$, thus a state can be represented by a row vector $(s(v_1), \cdots, s(v_n))$. Similar to Example \ref{example1.2}, a \emph{region crossing change} on a region defines a new state by switching $s(v_i)$ for all the vertices $v_i$ lying on the boundary of this region. We say two states are \emph{equivalent} if they are related by a sequence of region crossing changes.

As a natural question, we want to know for a fixed planar trivalent graph $G$, how many non-equivalent states does $G$ have. In this paper, we establish the following result, which provides an answer to this question.

\begin{theorem}\label{theorem1.3}
If $G$ is a planar trivalent graph of order $n$ $(n\geq4)$, and having $m$ regions, then
\begin{center}
$s(G)=\begin{cases}
2^{n-m+2}& \text{if all the regions are even;}\\
2^{n-m+1}& \text{if there exist odd regions but no adjacent odd regions, and }\phi(\mathcal{G})\subseteq\mathcal{H};\\
2^{n-m}& \text{otherwise.}
\end{cases}$
\end{center}
Here $s(G)$ denotes the number of non-equivalent states of $G$, and $\mathcal{H}=\{e, (13)\}\subset S_3$.
\end{theorem}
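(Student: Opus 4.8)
The plan is to follow the pattern of Example \ref{example1.2}. A state is a row vector in $\mathbb{Z}_2^n$, a region crossing change on a region $R$ adds to it the row of $M_H$ indexed by $R$, so two states are equivalent exactly when they differ by a vector of the row space of $M_H$; hence $s(G)=2^{n-\operatorname{rank}(M_H)}$. Putting $d=m-\operatorname{rank}(M_H)=\dim\ker(M_H^{T})$, this reads $s(G)=2^{(n-m)+d}$, so it suffices to prove that $d$ equals $2$, $1$, or $0$ in the three cases. Note that $\ker(M_H^{T})$ is the space of functions $f$ assigning a value in $\mathbb{Z}_2$ to each region so that, around every vertex of $G$, an even number of the three incident regions is assigned $1$; dually these are the vertex subsets of the triangulation $G^{*}$ meeting every triangle evenly.

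The main tool I would set up is a propagation principle: a solution $f$ is determined by its three values on the regions around any single vertex $v_0$. Indeed, if $g$ solves the system and vanishes on the three regions at $v_0$, then at any neighbour $v_1$ of $v_0$ the two regions meeting the edge $v_0v_1$ lie among the three at $v_1$, so $g$ vanishes on two and hence, by the relation at $v_1$, on all three of them; since $G$ is connected, $g\equiv 0$. As the three values at $v_0$ satisfy one linear relation, this already gives $d\le 2$ unconditionally. Two refinements come out of the same idea. If $R$ is an odd region with boundary of length $k$ and $B_i$ is the region across its $i$-th boundary edge, the relation at the $i$-th boundary vertex gives $f(B_i)=f(B_{i-1})+f(R)$, and going once around forces $f(R)=0$ because $k$ is odd; hence in the presence of an odd region the value at $v_0$ has only two admissible choices and $d\le 1$. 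If moreover two odd regions are adjacent along an edge $v_1v_2$, then $f$ vanishes on both and, by the relation at $v_1$, on the third region at $v_1$, so propagation gives $f\equiv 0$ and $d=0$. This supplies all the lower bounds for $d$ and settles the ``otherwise'' conclusion whenever adjacent odd regions exist.

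When all regions are even, the regions of $G$ admit a proper $3$-colouring, i.e. one in which edge-sharing regions differ, because this is equivalent to $3$-colourability of the Eulerian plane triangulation $G^{*}$, which is classical (a plane triangulation is $3$-chromatic iff it is Eulerian). Then the three regions at each vertex receive all three colours, so the indicator function of a union of two colour classes is a solution, and two of the three such functions are independent; thus $d\ge 2$ and $d=2$. For the last case --- odd regions present, none adjacent --- I would study the monodromy of the propagation principle: transporting the two-dimensional space $W$ of admissible triples around loops in the sphere avoiding the odd regions gives the homomorphism $\phi$ of the statement into $GL(W)\cong GL_2(\mathbb{Z}_2)\cong S_3$, and $d=\dim W^{\phi(\mathcal{G})}$. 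The monodromy around a region is trivial if the region is even and a transposition if it is odd, so $\dim W^{\phi(\mathcal{G})}$ is $2$ when $\phi(\mathcal{G})$ is trivial (recovering the all-even case), $1$ when $\phi(\mathcal{G})$ lies inside a single two-element subgroup, and $0$ otherwise. With the normalisation built into $\phi$ --- each odd region contributing the transposition $(13)$, whose fixed point is the colour forced on odd regions --- the middle option becomes $\phi(\mathcal{G})\subseteq\mathcal{H}$; and when it holds, the line $W^{\phi(\mathcal{G})}$ is spanned by the solution that is $0$ on the odd regions and $1$ on their neighbours, which is nonzero since every odd region has a neighbour. Hence $d=1$ there and $d=0$ in the remaining subcase.

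The routine parts will be the propagation principle and the all-even case. The hard part will be the last case: setting up $\mathcal{G}$ and $\phi$ so that the transport is well defined --- this is precisely where the hypothesis that no two odd regions are adjacent enters --- so that each odd region contributes the specific transposition $(13)$ rather than some unspecified conjugate, and then verifying that $\dim W^{\phi(\mathcal{G})}$ takes the stated values in each subcase.
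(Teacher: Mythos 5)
Your proposal is correct and takes essentially the same route as the paper: reduce to the nullity of $M_H^T$, identify its kernel with the $\mathbb{Z}_2$-colorings of the dual triangulation $G'$, dispose of the all-even case (via $3$-colorability of Eulerian plane triangulations, which is the paper's Section \ref{section2} extension argument) and the adjacent-odd case directly, and treat the remaining case by the coloring monodromy into $S_3$ --- your parallel transport on the space $W$ of admissible triples is exactly the paper's $\phi$ after the reduction $\{1,3\}\mapsto1$, $2\mapsto0$. The one point to nail down, which you partly flag, is that only the loop around the base odd vertex (lying on the base triangle, colored $2$) is guaranteed to give precisely $(13)$ rather than a conjugate transposition; this is what makes ``$\phi(\mathcal{G})$ lies in some order-two subgroup'' equivalent to the theorem's condition $\phi(\mathcal{G})\subseteq\mathcal{H}$.
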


The definition of the group homomorphism $\phi: \mathcal{G}\to S_3$ can be found in Section \ref{section2}. The assumption $n\geq4$ excludes the possibility that $G$ is the $\theta$-graph, which obviously has two non-equivalent states, although all the three regions are even. The reason why we exclude the $\theta$-graph is, in this case, $m=3>2=n$. Actually, since $2|E(G)|=3|V(G)|=3n$, the Euler Identity implies that $m=\frac{n}{2}+2$. It follows that $m\geq4$ if and only if $n\geq4$ and in this case we always have $n\geq m$.

As an analogue of Example \ref{example1.1} and Example \ref{example1.2}, the calculation of $s(G)$ can be translated into the problem of calculating the rank of an incidence matrix $M_H$ for a certain hypergraph $H$. Let us use $\{e_1, \cdots, e_l\}, \{R_1, \cdots, R_m\}$ and $\{v_1, \cdots, v_n\}$ to denote the set of edges, the set of regions and the set of vertices of $G$, respectively. As we mentioned above, here $l, m$ are both determined by $n$, i.e. $l=\frac{3n}{2}$ and $m=\frac{n}{2}+2$. Now we construct a hypergraph $H$ such that $V(H)=V(G)$ and each region $R_i$ corresponds to a hyperedge $r_i$, where $r_i=\{v_{i_1}, \cdots, v_{i_k}\}$ if $v_{i_1}, \cdots, v_{i_k}$ lie on the boundary of $R_i$. We will often abuse our notation by using $r_i$ to denote both a hyperedge of $H$ and the corresponding row vector of $M_H$. Since a state $s$ of $G$ corresponds to a row vector $s=(s(v_1), \cdots, s(v_n))$, taking region crossing changes on $\{R_{i_1}, \cdots, R_{i_k}\}$ yields a new state $s+\sum\limits_{j=1}^kr_{i_j}$. Therefore, for a given state of $G$, there are totally $2^{\text{rank}(M_H)}$ distinct states can be obtained from the given one by taking region crossing changes. It follows that the number of non-equivalent states of $G$ equals $2^{n-\text{rank}(M_H)}$. As a result, in order to prove Theorem \ref{theorem1.3}, it suffices to show that
\begin{center}
$\text{rank}(M_H)=\begin{cases}
m-2& \text{if all the regions are even;}\\
m-1& \text{if there exist odd regions but no adjacent odd regions, and }\phi(\mathcal{G})\subseteq\mathcal{H};\\
m& \text{otherwise.}
\end{cases}$
\end{center}

The followings are some examples.
\begin{figure}[h]
\centering
\includegraphics{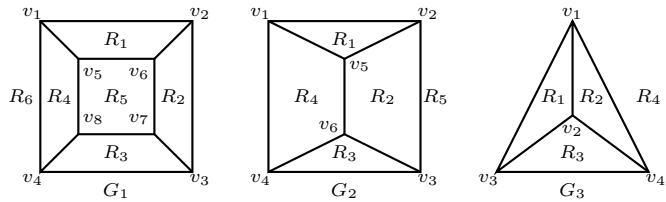}\\
\caption{Three planar trivalent graphs}\label{figure1}
\end{figure}

Here we have three planar trivalent graphs $G_1, G_2, G_3$ illustrated in Figure \ref{figure1}, which correspond to the three cases mentioned in Theorem \ref{theorem1.3}. According to our definition, the incidence matrices of $H_1, H_2$ and $H_3$ are listed below.

$M_{H_1}=\begin{pmatrix}
1&1&0&0&1&1&0&0\\
0&1&1&0&0&1&1&0\\
0&0&1&1&0&0&1&1\\
1&0&0&1&1&0&0&1\\
0&0&0&0&1&1&1&1\\
1&1&1&1&0&0&0&0
\end{pmatrix} M_{H_2}=\begin{pmatrix}
1&1&0&0&1&0\\
0&1&1&0&1&1\\
0&0&1&1&0&1\\
1&0&0&1&1&1\\
1&1&1&1&0&0
\end{pmatrix} M_{H_3}=\begin{pmatrix}
1&1&1&0\\
1&1&0&1\\
0&1&1&1\\
1&0&1&1
\end{pmatrix}$
Direct calculation shows that rank$(M_{H_1})=$ rank$(M_{H_2})=$ rank$(M_{H_3})=4$.

Note that $M_H$ is a matrix of size $m\times n$ $(m\leq n)$, and 1 appears three times in each column of $M_H$, since $G$ is trivalent.

The outline of the paper is as follows. In Section \ref{section2}, we study the so-called $\mathbb{Z}_3$-coloring of planar trivalent graphs and its reduction to a $\mathbb{Z}_2$-coloring. Section \ref{section3} is devoted to give a proof of Theorem \ref{theorem1.3}, based on a calculation of rank$(M_H)$. In Section \ref{section4}, we provide a criterion for triangulating $S^2$ with exactly two odd vertices.

\section{$\mathbb{Z}_3$-coloring and $\mathbb{Z}_2$-coloring of a triangulation}\label{section2}
Let $G$ be a planar trivalent graph embedded on the plane. As before, let us use $\{v_1, \cdots, v_n\}$, $\{e_1, \cdots, e_l\}$ and $\{R_1, \cdots, R_m\}$ to denote $V(G), E(G)$ and the set of all regions respectively. Note that the dual graph $G'$ is a triangulation of the plane, of which one triangle is unbounded. Equivalently, $G'$ also can be regarded as a triangulation of $S^2$. Let $\{v_1', \cdots, v_m'\}, \{e_1', \cdots. e_l'\}$ and $\{R_1', \cdots, R_n'\}$ be $V(G'), E(G')$ and the region set of $G'$, respectively. Throughout this paper, we will frequently switch our focus from $G$ to $G'$ and vice verse, which will not cause confusion since they determine each other mutually.

By a \emph{$\mathbb{Z}_3$-coloring} of the triangulation $G'$, we mean a function $c$ from $\{v_1', \cdots, v_m'\}$ to a 3-element set of colors $\{1, 2, 3\}$ such that adjacent vertices always receive distinct colors. In other words, the three vertices of each triangle are precisely colored by 1, 2 and 3 respectively. It is worthy to point out that, unlike the Tait coloring, which assigns a color from $\{1, 2, 3\}$ to each edge such that the edges of different colors are incident at each vertex, our $\mathbb{Z}_3$-coloring does not always exist. For example, if the degree of a vertex $v_i'$ is odd (we simply say $v_i'$ is \emph{odd}), then such a $\mathbb{Z}_3$-coloring does not exist. It is not difficult to observe that converse also holds, i. e. a triangulation $G'$ admits a $\mathbb{Z}_3$-coloring if and only if all the vertices of it are even. In order to see this, it is sufficient to notice that if a triangle $R_i'$ has been colored properly, the coloring of any other triangle $R_j'$ which shares an edge with $R_i'$ has the only option. Therefore, by an arbitrary choice of a coloring on a triangle, one can extend this colorings to all other triangles uniquely. The key point here is, the two coloring extending around an even vertex are consistent. Hence there is no conflict in the extending. Figure \ref{figure2} explains what will happen if one slides a path over an even vertex or an odd vertex.

\begin{figure}[h]
\centering
\includegraphics{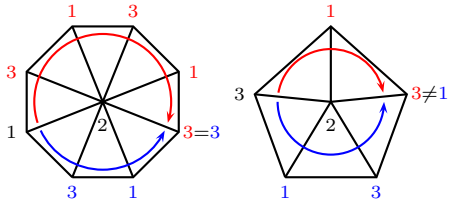}\\
\caption{Sliding over a vertex}\label{figure2}
\end{figure}

When some vertices of $G'$ are odd, there exists no global $\mathbb{Z}_3$-coloring. However, we can still discuss the $\mathbb{Z}_3$-coloring locally. Consider a sequence of triangles $R_{i_1}', \cdots, R_{i_k}'$ such that $R_{i_j}'$ and $R_{i_{j+1}}'$ are adjacent, here $1\leq j\leq k-1$. Then a coloring of $R_{i_1}'$ induces a unique coloring of $R_{i_k}'$ with respect to this sequence. In particular, if $R_{i_1}'$=$R_{i_k}'$, i. e. $R_{i_1}', \cdots, R_{i_k}'$ are dual to a circuit in $G$, then this sequence induces a recoloring of $R_{i_1}'$.

For a given planar trivalent graph $G$, without loss of generality, we assume that the regions $R_1, \cdots, R_k$ $(k\geq2)$ are odd and other regions $R_{k+1}, \cdots, R_m$ are even. Furthermore, we also assume that any two odd regions are not adjacent. Suppose one of the three regions around $v_1$ is odd, say $R_1$. Note that since no pair of odd regions are adjacent, the other two regions must be even, say $R_i, R_j$ ($i, j\geq k+1$). Denote the vertices of $G'$ corresponding to $R_1, \cdots, R_m$ by $v_1', \cdots, v_m'$, then a circuit $l: v_1=v_{i_1}, \cdots, v_{i_k}=v_1$ that begins and ends at $v_1$ corresponds to a sequence of adjacent triangles $R_1'=R_{i_1}', \cdots, R_{i_k}'=R_1'$, which begins and ends at $R_1'$. Fix a $\mathbb{Z}_3$-coloring of the triangle $R_1'$ such that the vertex $v_1'$ is colored by 2. Now suppose we are given a circuit that begins and ends at $v_1$, the corresponding sequence of adjacent triangles induces a recoloring of $R_1'$. As we have seen, such a recoloring is preserved if one slides this sequence over an even vertex of $G'$. It follows immediately that, with a given coloring of $R_1'$ such that the vertex $v_1'$ is colored by 2, the recoloring defines a group homomorphism
\begin{center}
$\phi: \mathcal{G}=\pi_1(S^2\setminus\{v_1', \cdots, v_k'\})\to S_3$,
\end{center}
here $S_3$ denotes the symmetric group of degree 3. This homomorphism was named as \emph{even obstruction map} in \cite{Fis1977}, but here we would like use the name \emph{coloring monodromy}, following \cite{Izm2015}.

Now we turn to the $\mathbb{Z}_2$-coloring of the triangulation $G'$. Roughly speaking, a $\mathbb{Z}_2$-coloring can be obtained from a $\mathbb{Z}_3$-coloring by replacing 1 and 3 with 1 and replacing 2 with 0. More precisely, a \emph{$\mathbb{Z}_2$-coloring} of the triangulation $G'$ is a map from the vertex set $V(G')$ to $\mathbb{Z}_2=\{0, 1\}$, such that for each triangle, the sum of the colors of the three vertices equals 0 (mod 2). It is evident that each $\mathbb{Z}_3$-coloring corresponds to a $\mathbb{Z}_2$-coloring. However, the converse is not true. As an example, the dual graph of the trivalent planar graph $G_2$ in Figure \ref{figure1} does not admit a $\mathbb{Z}_3$-coloring. However, the coloring which assigns 0 to $R_1, R_3$ and 1 to the rest regions suggests that it admits a $\mathbb{Z}_2$-coloring.

\section{The proof of the main theorem}\label{section3}
We first explain how to relate the $\mathbb{Z}_2$-coloring of $G'$ to the region crossing change problem on $G$. Recall that a $\mathbb{Z}_2$-coloring assigns 0 or 1 to each vertex of $G'$ such that the three vertices of each triangle are colored by either $\{0, 0, 0\}$ or $\{0, 1, 1\}$. Obviously, if the vertices of one triangle of $G'$ is colored by $\{0, 0, 0\}$, then all vertices of $G'$ must be colored by 0. We call this kind of coloring the \emph{trivial coloring}, which always exists. What we concern about is the number of nontrivial colorings.

According to the coloring rule, for a nontrivial coloring, if one vertex in $G'$ is colored by 0, then all neighbors of it have to be colored by 1, otherwise one obtains the trivial coloring. On the other hand, if a vertex in $G'$ has color 1, then the neighbors of it must be colored by 0 and 1 alternatively. It follows immediately that if the degree of a vertex in $G'$ is odd, it only can be assigned with 0. For example, if there exist two adjacent odd vertices in $G'$, then $G'$ admits no nontrivial $\mathbb{Z}_2$-coloring.

Now we give the proof of Theorem \ref{theorem1.3}.
\begin{proof}
Denote the nullity of $M_H^T$ by $t$, here $M_H^T$ denotes the transpose of $M_H$. It suffices to show that
\begin{center}
$t=\begin{cases}
2& \text{if all the regions are even;}\\
1& \text{if there exist odd regions but no adjacent odd regions, and }\phi(\mathcal{G})\subseteq\mathcal{H};\\
0& \text{otherwise.}
\end{cases}$
\end{center}
Suppose we are given a solution $\textbf{x}=(x_1, \cdots, x_m)\in M_{1\times m}(\mathbb{Z}_2)$ to the equation $\textbf{x}M_H=0$. In other words, we have $\sum\limits_{i=1}^mx_ir_i=0$. It follows immediately that assigning $x_i$ to vertex $v_i'$ provides a $\mathbb{Z}_2$-coloring of the triangulation $G'$. Therefore, what we need to do is to calculate the number of ``linearly independent" $\mathbb{Z}_2$-coloring of $G'$. We consider three cases.
\begin{enumerate}
  \item Case 1: all vertices of $G'$ are even. In this case, one can freely choose a triangle and color the three vertices by $\{0, 1, 1\}$, $\{1, 0, 1\}$, or $\{1, 1, 0\}$. Now each coloring can be extended to give a global $\mathbb{Z}_2$-coloring of $G'$, since there exists no odd vertex. Note that the third coloring can be represented as the sum of the first two colorings. Obviously, the first two coloring are linearly independent. It follows that in this case the solutions to the equation $\textbf{x}M_H=0$ form a 2-dimensional vector space.
  \item Case 2: there exists two adjacent odd vertices. As we mentioned before, in this case, $G'$ admits no nontrivial $\mathbb{Z}_2$-coloring. As a result, the equation $\textbf{x}M_H=0$ has only zero solution. Thus, the nullity of $M_H^T$ equals zero.
  \item Case 3: there exist some adjacent odd vertices but no pair of them are adjacent. As above, if there exists no nontrivial $\mathbb{Z}_2$-coloring, then the nullity of $M_H^T$ equals zero. Otherwise, all odd vertices have to be colored by 0, therefore the equation $\textbf{x}M_H=0$ has only one nontrivial solution. Choose a triangle $R_1'$ such that one vertex of it, say $v_1'$, is an odd vertex. Fix a $\mathbb{Z}_3$-coloring of $R_1'$ such that $v_1'$ is assigned with 2. Clearly, the triangulation $G'$ admits a nontrivial $\mathbb{Z}_2$-coloring if and only if the recoloring induced by any circuit of $G$ that begins and ends at $v_1$ also assigns 2 to $v_1'$. In other words, the coloring monodromy take values in $\mathcal{H}=\{e, (13)\}\subset S_3$. Under this assumption, the nullity of $M_H^T$ equals one.
\end{enumerate}
The proof is finished.
\end{proof}

\section{Triangulation of $S^2$ with two odd vertices}\label{section4}
Now we consider the special case that $V(G')$ contains exactly two odd vertices, say $v_1'$ and $v_2'$. According to our discussion above, if $c: \{v_1', \cdots, v_m'\}\to\mathbb{Z}_2$ is a nontrivial coloring then $c(v_1')=c(v_2')=0$. Let us assign 0 to $v_1'$, then any path $p$ connecting $v_1'$ and $v_2'$ induces a color for $v_2'$, denoted by $c_p(v_2')$. If this induced color $c_p(v_2')=0$, then we say $v_1'$ and $v_2'$ are \emph{compatible} with respect to $p$. Otherwise, we say $v_1'$ is \emph{incompatible} with $v_2'$ with respect to $p$. Since there are only two odd vertices, this induced color $c_p(v_2')$ actually does not depend on the choice of $p$. If $v_1', v_2'$ are incompatible, then $G'$ admits no nontrivial $\mathbb{Z}_2$-coloring. The following result tells us that this never happens.

\begin{proposition}\label{proposition4.1}
Let $G'$ be a triangulation of $S^2$ with exactly two odd vertices, then $G'$ admits exactly one nontrivial $\mathbb{Z}_2$-coloring.
\end{proposition}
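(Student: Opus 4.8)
The plan is to read the obstruction to a global $\mathbb{Z}_2$-coloring off the coloring monodromy, exploiting the fact that with only two punctures $\mathcal{G}=\pi_1(S^2\setminus\{v_1',v_2'\})$ is infinite cyclic. I would take $v_1'$ as the distinguished odd vertex and fix, as in Section \ref{section2}, a $\mathbb{Z}_3$-coloring of a triangle $R_1'$ incident to $v_1'$ with $v_1'$ colored $2$; this gives the monodromy $\phi:\mathcal{G}\to S_3$. Since $\gamma_1\gamma_2=1$ in $\mathcal{G}$ for small loops $\gamma_1,\gamma_2$ around $v_1',v_2'$ (their product bounds a disc in $S^2$), the group $\mathcal{G}$ is generated by $\gamma_1$, so $\phi(\mathcal{G})=\langle\phi(\gamma_1)\rangle$.

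The key computation is that the coloring monodromy around an odd vertex $v$ is the transposition of $S_3$ fixing the color of $v$. I would verify this by sliding the reference $\mathbb{Z}_3$-coloring once around the link cycle of $v$: every triangle incident to $v$ contains $v$, so each elementary move is across an edge emanating from $v$ and hence carries the color $c(v)$ along unchanged while the colors on the two link vertices alternate between the other two values; since $\deg(v)$ is odd, a full turn interchanges those two values (compare Figure \ref{figure2}). The resulting permutation of the three colors of $R_1'$ fixes $c(v)$ and is nontrivial (a $\mathbb{Z}_3$-coloring never extends around an odd vertex), hence it is the transposition fixing $c(v)$. Applied to $v_1'$ this gives $\phi(\gamma_1)=(13)$, so $\phi(\mathcal{G})=\langle(13)\rangle=\mathcal{H}$.

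Now I would pass to $\mathbb{Z}_2$. A $\mathbb{Z}_2$-coloring also extends uniquely across each edge, and a $\mathbb{Z}_3$-coloring of a triangle and its image under $2\mapsto 0,\ 1,3\mapsto 1$ propagate compatibly; hence the monodromy of the induced $\mathbb{Z}_2$-coloring around a loop $\gamma$ is the image of $\phi(\gamma)$. As $\phi(\mathcal{G})=\mathcal{H}=\{e,(13)\}$ and $(13)$ fixes each of the three nontrivial $\mathbb{Z}_2$-colorings of a triangle (it preserves which vertex carries the color $0$), this image is always the identity. Therefore the $\mathbb{Z}_2$-coloring of $R_1'$ assigning $0$ to $v_1'$ and $1$ to its other two vertices extends single-valuedly (since $\mathcal{G}=\langle\gamma_1\rangle$) to a global $\mathbb{Z}_2$-coloring of $G'$, which is nontrivial by those two $1$'s. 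For uniqueness, in any nontrivial $\mathbb{Z}_2$-coloring the odd vertices $v_1',v_2'$ must receive $0$; the triangle condition at a triangle incident to $v_1'$ then forces its other two vertices to have equal color, and that color is not $0$ (else the color $0$ would spread over the connected triangulation $G'$, giving the trivial coloring), so the coloring is pinned down on that triangle and, by unique edgewise propagation, everywhere. Hence $G'$ has exactly one nontrivial $\mathbb{Z}_2$-coloring. (Incidentally $v_1'$ and $v_2'$ cannot be adjacent, since otherwise a vertex completing a triangle with both would be forced to $0$ and the coloring would propagate to the trivial one, contradicting what we have just shown.)

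The step I expect to be the main obstacle is the local monodromy computation around an odd vertex: keeping track of the identification of the three colors through a full turn around the link and confirming that the monodromy is exactly the transposition fixing the central color rather than, say, a $3$-cycle. Everything afterwards is formal, relying only on $\mathcal{G}\cong\mathbb{Z}$ and the compatibility of $\mathbb{Z}_3$- and $\mathbb{Z}_2$-propagation across edges.
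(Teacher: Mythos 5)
Your proof is correct, but it follows a genuinely different route from the paper. You argue directly with the coloring monodromy: since $\mathcal{G}=\pi_1(S^2\setminus\{v_1',v_2'\})\cong\mathbb{Z}$ is generated by a link loop $\gamma_1$ around $v_1'$, and the local monodromy around an odd vertex (computed by sliding around its star, where the central color is carried along unchanged and the two link colors alternate, an odd number of steps producing a swap) is the transposition fixing $c(v_1')=2$, you get $\phi(\mathcal{G})=\mathcal{H}=\{e,(13)\}$; reducing mod $2$ (and checking that $\mathbb{Z}_3$- and $\mathbb{Z}_2$-propagation across an edge are compatible, so the $\mathbb{Z}_2$-monodromy is the image of $\phi$) then kills the obstruction and yields the unique nontrivial $\mathbb{Z}_2$-coloring by the standard propagation/uniqueness argument. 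The paper instead argues by contradiction: it normalizes a path between the two odd vertices so that all intermediate vertices receive color $1$, then performs a surgery on the trivalent graph $G$ (inserting a small circle at each crossing of the dual curve with a shared edge) to produce a new planar trivalent graph whose dual triangulation of $S^2$ has exactly two odd vertices that are \emph{adjacent}, contradicting the known result of Fisk and Izmestiev that this is impossible. So the paper's proof leans on an external theorem (whose original proof rests on the four-color theorem, or alternatively on Izmestiev's monodromy argument), while yours is self-contained, in effect redoing Izmestiev's computation in the two-puncture case; as a bonus it re-derives the non-adjacency statement and ties Proposition \ref{proposition4.1} directly to the criterion $\phi(\mathcal{G})\subseteq\mathcal{H}$ appearing in Theorem \ref{theorem1.3}, from which Corollary \ref{corollary4.2} is immediate. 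The one step you rightly flagged, the identification of the local monodromy around an odd vertex as the transposition fixing the central color (rather than a $3$-cycle), does check out, because every triangle in the star contains the odd vertex, so its color never changes during the slide.
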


\begin{proof}
Without loss of generality, we use $v_1', v_k'$ to denote the two odd vertices, and $p: v_1', \cdots, v_k'$ to denote a $v_1'-v_k'$ path. It is sufficient to show that $c_p(v_k')=0$.

Let us assume that this is not true, i.e. $c_p(v_k')=1$. First, we claim that there exists a new path $p'$ connecting $v_1'$ and $v_k'$ such that $c_{p'}(v_i')=1$ $(2\leq i\leq k)$. Since $c(v_1')=0$, we know that $c_p(v_2')=1$. Walking along $p$ until we meet the first vertex with color 0, say $v_s'$, where $2<s<k$. Since $c_p(v_s')=0$, all its neighbors should have color 1, in particular $c_p(v_{s-1}')=c_p(v_{s+1}')=1$. By replacing $p: v_1', \cdots, v_{s-1}', v_s', v_{s+1}', \cdots, v_k'$ with $p': v_1', \cdots, v_{s-1}', v_{s_1}', \cdots, v_{s_t}', v_{s+1}', \cdots, v_k'$, where $v_{s_1}', \cdots, v_{s_t}'$ are neighbors of $v_s'$ which connects $v_{s-1}'$ and $v_{s+1}'$. Now the new path $p'$ has one less vertex with color 0. Repeating this process until we obtain a desired path. For convenience sake, let us assume that the original path $p: v_1', \cdots, v_k'$ satisfies $c_{p}(v_i')=1$ $(2\leq i\leq k)$.

Consider the planar trivalent graph $G$, now the path $p$ corresponds to a curve passing through a sequence of regions $R_1, \cdots, R_k$. This curve intersects each common edge of $R_i$ and $R_{i+1}$ $(2\leq i\leq k-2)$ transversely. By replacing each of these intersection points with a small circle, one obtains a new planar trivalent graph $G_1$, see Figure \ref{figure3}. The key point here is, due to the assumption $c_{p}(v_i')=1$ $(2\leq i\leq k)$, the new trivalent graph has exactly two odd regions which are adjacent to each other. In Figure \ref{figure3} these two regions are denoted by $R_{k-1}^1$ and $R_{k-1}^2$. However, it known that for any triangulation of $S^2$, if there are only two odd vertices, they can not be adjacent \cite{Fis1978,Izm2015}. This contradiction means our assumption is incorrect and hence $v_1', v_k'$ are compatible. The proof is finished.
\begin{figure}
\centering
\includegraphics{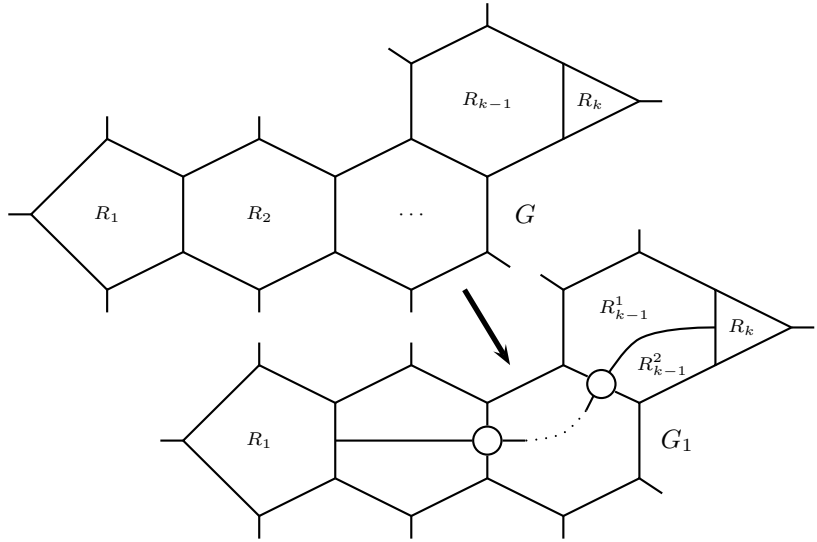}\\
\caption{A new trivalent graph $G_1$ obtained from $G$ }\label{figure3}
\end{figure}
\end{proof}

As a corollary, we have the following result.

\begin{corollary}\label{corollary4.2}
If $G$ is a planar trivalent graph of order $n$, and there are exactly two odd regions among all the $m$ regions, then $s(G)=2^{n-m+1}$.
\end{corollary}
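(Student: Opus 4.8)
The plan is to deduce Corollary \ref{corollary4.2} directly from Theorem \ref{theorem1.3} together with Proposition \ref{proposition4.1}, so that essentially no new computation is needed. First I would observe that the hypothesis places us squarely in the second branch of Theorem \ref{theorem1.3}: since there are exactly two odd regions, say $R_1$ and $R_2$ (equivalently $v_1', v_2'$ are the two odd vertices of $G'$), there are certainly odd regions, and I must check that they are not adjacent. But this is exactly the fact quoted at the end of the proof of Proposition \ref{proposition4.1}: a triangulation of $S^2$ with only two odd vertices cannot have those two vertices adjacent \cite{Fis1978,Izm2015}. Hence ``there exist odd regions but no adjacent odd regions'' holds. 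The remaining condition $\phi(\mathcal{G})\subseteq\mathcal{H}$ is precisely equivalent, by the Case 3 analysis in the proof of Theorem \ref{theorem1.3}, to the statement that $G'$ admits a nontrivial $\mathbb{Z}_2$-coloring; and Proposition \ref{proposition4.1} guarantees such a coloring exists. Therefore the second branch applies and $s(G)=2^{n-m+1}$.

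The key steps, in order, are: (1) identify the two odd regions with the two odd vertices of the dual triangulation $G'$; (2) invoke the non-adjacency result for two odd vertices in a triangulation of $S^2$ to rule out adjacent odd regions; (3) apply Proposition \ref{proposition4.1} to obtain a nontrivial $\mathbb{Z}_2$-coloring of $G'$; (4) translate the existence of this coloring back into the condition $\phi(\mathcal{G})\subseteq\mathcal{H}$ via the correspondence established in Section \ref{section2} and used in Case 3 of the proof of Theorem \ref{theorem1.3}; (5) conclude by reading off the value $2^{n-m+1}$ from the second branch of Theorem \ref{theorem1.3}.

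The only genuine subtlety — the ``main obstacle'', though it is a mild one — is step (4): making sure that the equivalence between ``$G'$ has a nontrivial $\mathbb{Z}_2$-coloring'' and ``$\phi(\mathcal{G})\subseteq\mathcal{H}$'' is invoked in the right direction. The proof of Theorem \ref{theorem1.3} establishes that under the no-adjacent-odd-regions hypothesis the nullity of $M_H^T$ is $1$ precisely when the coloring monodromy takes values in $\mathcal{H}$, and that this in turn is equivalent to the existence of a nontrivial $\mathbb{Z}_2$-coloring. Proposition \ref{proposition4.1} supplies exactly the latter, so the direction we need is the one already proved. I would simply cite Proposition \ref{proposition4.1} and Theorem \ref{theorem1.3}, noting that the case of exactly two odd regions automatically satisfies all hypotheses of the middle branch, and conclude. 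No case analysis beyond this is required, and the formula $2^{n-m+1}$ follows immediately.
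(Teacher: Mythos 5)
Your argument is correct: with exactly two odd regions the dual triangulation $G'$ of $S^2$ has exactly two odd vertices, the cited non-adjacency result rules out adjacent odd regions, Proposition \ref{proposition4.1} supplies a nontrivial $\mathbb{Z}_2$-coloring, and the equivalence in Case 3 of the proof of Theorem \ref{theorem1.3} translates this into $\phi(\mathcal{G})\subseteq\mathcal{H}$, so the middle branch applies and $s(G)=2^{n-m+1}$. The paper's intended derivation is slightly more direct and bypasses the statement of Theorem \ref{theorem1.3} altogether: Proposition \ref{proposition4.1} says $G'$ has \emph{exactly one} nontrivial $\mathbb{Z}_2$-coloring, i.e.\ the solution space of $\textbf{x}M_H=0$ is one-dimensional, so the nullity of $M_H^T$ is $1$, hence $\mathrm{rank}(M_H)=m-1$ and $s(G)=2^{n-\mathrm{rank}(M_H)}=2^{n-m+1}$. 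That route needs neither the non-adjacency result of Fisk--Izmestiev nor the monodromy condition as separate inputs, whereas your packaging routes the same content through the hypotheses of the middle branch and therefore must verify them one by one; the payoff of your version is that it makes explicit exactly which hypotheses of Theorem \ref{theorem1.3} are automatic in the two-odd-region case, while the direct version is shorter and also quietly dispenses with the $n\geq4$ hypothesis of Theorem \ref{theorem1.3} (which is in any case automatic here, since the presence of an odd region already excludes the $\theta$-graph).
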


Based on his previous work \cite{Fis1973}, Fisk noticed that if a surface admits a triangulation with two odd vertices and they are adjacent, then this triangulation has no four coloring \cite[Lemma 1]{Fis1978}. Thanks to the celebrated work of Appel and Haken \cite{App1977} (see also \cite{RSS1997}), we know that the four-color theorem holds for $S^2$. Therefore, if a triangulation of $S^2$ has only two odd vertices, then they cannot be adjacent. Recently, this result was reproved by Izmestiev \cite{Izm2015} using coloring monodromy. The main idea of Izmestiev's proof can be sketched as follows. Consider $\mathbb{Z}_3$-colorings of the triangulation $G'$, there exists a homomorphism from the fundamental group of the complement of the set of odd vertices to $S_3$. If the two odd vertices $\{v_1', v_k'\}$ are adjacent, we choose a triangle containing these two odd vertices as the beginning triangle, then the images of the two loops around these two odd vertices generates $S_3$. However, this is impossible since $\pi_1(S^2\backslash\{v_1', v_k'\})\cong\mathbb{Z}$, which is abelian.

Proposition \ref{proposition4.1} tells us that if a triangulation of $S^2$ has exactly two odd vertices, then these two odd vertices not only can not be adjacent, actually they must be compatible. It is interesting to point out that the converse of this result also holds.

More precisely, let us consider a convex $(2m+1)$-gon centered at $u_0'$ and a convex $(2n+1)$-gon centered at $v_0'$ on $S^2$, and label the vertices by $\{u_1', \cdots, u_{2m+1}'\}$ and $\{v_1', \cdots, v_{2n+1}'\}$ respectively. Adding edges $u_0'u_i'$ $(1\leq i\leq 2m+1)$ and edges $v_0'v_j'$ $(1\leq j\leq 2n+1)$ divides these two polygons into $2m+1$ and $2n+1$ triangles. Suppose these two polygons are connected by a sequence of adjacent triangles such that consecutive triangles share an edge, and $u_0'$ and $v_0'$ are compatible with respect to this sequence of triangles. See Figure \ref{figure4} for an example, where all vertices with color 0 are indicated. Now, we obtain a triangulation $T$ of a polygon with only two interior vertices, both of which are of odd degree.

\begin{figure}[h]
\centering
\includegraphics{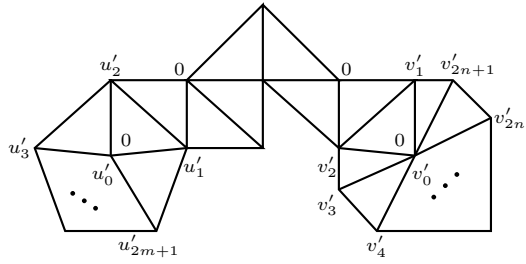}\\
\caption{Two compatible odd vertices induced by a sequence of triangles}\label{figure4}
\end{figure}

\begin{proposition}\label{proposition4.3}
The triangulation $T$ can be extended to give a triangulation of $S^2$ with only two odd vertices.
\end{proposition}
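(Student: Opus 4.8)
The plan is to treat Proposition~\ref{proposition4.3} as a completion problem. Since $T$ is a triangulated disk in $S^2$, its complement $D'=S^2\setminus\operatorname{int}(T)$ is again a disk, whose boundary is the polygon $P=\partial T$; write $x_1,\dots,x_b$ for the vertices of $P$. Any triangulation of $D'$ agreeing with $T$ along $P$ yields a triangulation of $S^2$ extending $T$, in which $\deg(x_i)=\deg_T(x_i)+\deg_{D'}(x_i)-2$, the interior vertices $u_0',v_0'$ of $T$ keep their odd degrees, and every vertex inserted inside $D'$ keeps its $D'$-degree. So it suffices to triangulate $D'$ so that (i) every interior vertex of $D'$ is even, and (ii) $\deg_{D'}(x_i)\equiv\deg_T(x_i)\pmod 2$ for all $i$. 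The double count $\sum_i\deg_T(x_i)=2|E(T)|-\deg_T(u_0')-\deg_T(v_0')$ is even, so the targets in (ii) are globally consistent; what is not automatic is realising them without creating a new odd vertex, and this is exactly where the compatibility of $u_0'$ and $v_0'$ must be used.

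The cleanest first step is a doubling argument. After a preliminary $1$-to-$4$ refinement of $T$ --- which preserves the parity of every vertex, inserts only vertices of degree $4$ or $6$, and destroys every interior edge joining two boundary vertices --- glue a mirror copy $\bar T$ of $T$ to $T$ along $P$, obtaining a simplicial triangulation $\Sigma=T\cup_P\bar T$ of $S^2$. Since $\bar T$ is $T$ combinatorially, with the same boundary, $\deg_{\bar T}(x_i)=\deg_T(x_i)$, so in $\Sigma$ each $x_i$ is even (of degree $2\deg_T(x_i)-2$) and the only odd vertices are $u_0',v_0'$ and their mirror images $\bar u_0',\bar v_0'$, the latter pair lying inside the subdisk $\bar T$. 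It then remains to alter $\Sigma$ inside $\bar T$, keeping $T$ fixed, so that $\bar u_0',\bar v_0'$ become even. For this one uses that $\{\bar u_0',\bar v_0'\}$ is a \emph{compatible} pair: by hypothesis $\{u_0',v_0'\}$ is, so the coloring monodromy of $\Sigma$ around a loop separating $\{u_0',v_0'\}$ from $\{\bar u_0',\bar v_0'\}$ is trivial, whence so is the monodromy around the mirror loop enclosing $\bar u_0',\bar v_0'$. Choosing a simple edge path in $\operatorname{int}(\bar T)$ from $\bar u_0'$ to $\bar v_0'$ that avoids $u_0',v_0'$ and $P$, and refining nearby triangles to gain room, one cancels this compatible pair of odd vertices by the local retriangulations of Fisk and Izmestiev \cite{Fis1978,Izm2015}, all confined to a disk $N\subset\operatorname{int}(\bar T)$ across whose boundary the monodromy vanishes. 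This produces a triangulation of $S^2$ whose only odd vertices are $u_0'$ and $v_0'$.

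A more hands-on alternative, which extends $T$ itself rather than a refinement, is to fill $D'$ by coning from a single new vertex $y$ and then repair parities: coning gives $\deg_{D'}(x_i)=3$ for every $i$, so the only violations of (i)--(ii) are $y$ itself (when $b$ is odd) and those $x_i$ with $\deg_T(x_i)$ even. Subdividing a spoke $yx_i$ flips the parities of $x_{i-1},x_i,x_{i+1}$ while inserting only an even-degree vertex, so repairing the boundary violations reduces to solving a linear system over $\mathbb{Z}_2$ on the cycle $P$, while $y$ is dealt with by coning over all but one of the edges of $P$ instead.

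The step I expect to be the genuine obstacle is precisely this parity repair. In the doubling route it is the assertion that a compatible pair of odd vertices can be cancelled by local moves confined to an arbitrarily small disk about a joining path: one must exhibit the explicit sequence of retriangulations, verify it keeps $\partial N$ (hence $T$ and $P$) fixed, creates no new odd vertex, and keeps the complex simplicial. In the direct route it is the solvability of the $\mathbb{Z}_2$-system (whose coefficient matrix drops rank when $3\mid b$) together with the separate handling of the apex. Either way, the crux is to show that the combinatorial data carried by the two fans and the connecting strip of $T$ --- that is, the compatibility of $u_0'$ and $v_0'$ --- is exactly what makes the repair possible; the work lies in the bookkeeping rather than in any further idea.
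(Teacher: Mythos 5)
Your reduction of the problem is set up correctly (the final degree at a boundary vertex is $\deg_T(x_i)+\deg_{D'}(x_i)-2$, so one needs a triangulation of the complementary disk with even interior vertices and prescribed boundary parities), and your doubling construction up to the point where the only odd vertices of $\Sigma$ are $u_0',v_0',\bar u_0',\bar v_0'$ is plausible. But the proof stops exactly where the proposition actually lives. The step you defer --- cancelling the compatible pair $\bar u_0',\bar v_0'$ by retriangulations confined to a disk $N\subset\operatorname{int}(\bar T)$, keeping $\partial N$ fixed and creating no new odd vertices --- is not a citable lemma of Fisk or Izmestiev: \cite{Fis1978} is the non-existence statement for two \emph{adjacent} odd vertices, and what \cite{Izm2015} supplies is the criterion that a $k$-gon admits a triangulation with all vertices even if and only if $3\mid k$. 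That criterion already shows the obstruction you must beat is a mod $3$ condition on a boundary length, not a $\mathbb{Z}_2$ parity condition; your second route, which treats the repair as a linear system over $\mathbb{Z}_2$ on the cycle $P$ (and whose spoke-subdivision move in fact flips only $x_{i-1},x_{i+1}$, not $x_i$), does not engage with this obstruction at all. So in both routes the ``parity repair'' you flag as the obstacle is precisely the unproved content, and a local rel-boundary cancellation statement for a compatible pair is essentially a restatement (in fact a strengthening) of the proposition itself.

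For comparison, the paper's proof does this work explicitly: it attaches fans of triangles along alternating boundary arcs of $T$ so that every vertex of the enlarged polygon except $u_0',v_0'$ becomes even, and then proves that the number $N(T)$ of boundary edges of the enlarged polygon is divisible by $3$, so that Izmestiev's theorem triangulates the complement with all vertices even. The divisibility is obtained by a reduction: local moves (deleting two edges at a vertex of degree $\geq 6$, contracting two edges at a vertex of degree $5$) each preserve compatibility of $u_0',v_0'$ and change $N$ by a multiple of $3$, until one reaches the normal form of Figure \ref{figure9}, where compatibility forces the connecting strip to consist of $3l$ triangles and $N=3(m+n+l)$. This is where the hypothesis is used quantitatively; calling it bookkeeping understates that it is the proof. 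A secondary issue: your doubling route begins with a $1$-to-$4$ refinement, so what you would extend is a refinement of $T$ rather than $T$ itself, whereas the proposition (and its intended use as a converse to Proposition \ref{proposition4.1}) asks for an extension of the given triangulated polygon.
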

\begin{proof}
The main idea of the proof is adding some triangles to the triangulated polygon to obtain a bigger triangulated polygon, such that all the vertices are even, except the two interior vertices $u_0'$ and $v_0'$. It is known that a $k$-gon admits a triangulation with all vertices of even degree if and only if $k$ is divisible by 3 \cite[Theorem 3]{Izm2015}. Therefore, if we can show that the number of edges on the boundary of the new polygon is divisible by 3, then the complement of the new polygon admits a triangulation with all vertices of even degree. Together with the triangulation of the new polygon, we obtain a triangulation of $S^2$ with only two odd vertices.

\begin{figure}[h]
\centering
\includegraphics{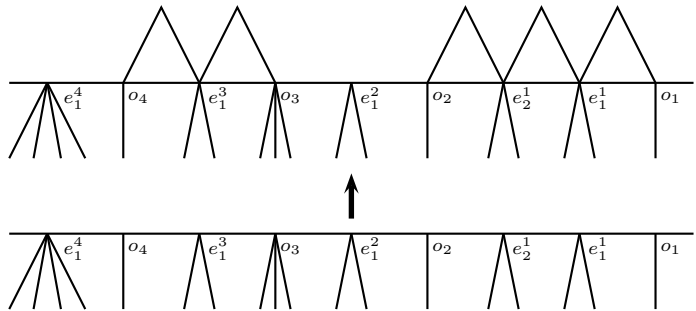}\\
\caption{Adding triangles}\label{figure5}
\end{figure}

In order to explain how to add triangles, let us temporarily use $o_1, \cdots, o_{2k}$ to denote the odd vertices on the boundary of the polygon and use $e^{i}_1, \cdots, e^{i}_{h_i}$ to denote the even vertices between $o_i$ and $o_{i+1}$. In particular, $e^{2k}_1, \cdots, e^{2k}_{h_{2k}}$ denote the even points between $o_{2k}$ and $o_1$. For each odd $i$, we attach $h_i+1$ triangles along the edges $o_ie^i_1, e^i_1e^i_2, \cdots, e^i_{h_i-1}e^i_{h_i}, e^i_{h_i}o_{i+1}$. An example is locally sketched in Figure \ref{figure5}. Finally, we obtain a triangulation of a bigger polygon with only two odd vertices $u_0', v_0'$. It is easy to find that the number of edges on the boundary of this new polygon is equal to $\sum\limits_{i=1}^{2k}(h_i+1)+\sum\limits_{j=1}^{k}(h_{2j-1}+1)$. Let us use $N(T)$ to denote this number. Now what we need to prove is $3|N(T)$.

We remark that whether $N(T)$ is divisible by 3 does not depend on the choice of the first odd vertex. Actually, if we choose $o_2$ as the first odd vertex, the number of edges on the boundary of the new polygon equals $\sum\limits_{i=1}^{2k}(h_i+1)+\sum\limits_{j=1}^{k}(h_{2j}+1)$, which is divisible by 3 exactly when $N(T)$ is divisible by 3.

Without loss of generality, we assume that the first triangle attaches the $(2m+1)$-gon along the edge $u_1'u_2'$ and the last triangle attaches the $(2n+1)$-gon along the edge $v_1'v_2'$. Let us draw the sequence of triangles in the shape of a rectangle, see Figure \ref{figure6} for an example.
\begin{figure}[h]
\centering
\includegraphics{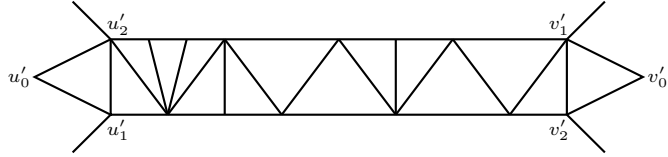}\\
\caption{A sequence of adjacent triangles}\label{figure6}
\end{figure}

Choose a vertex $p\notin\{u_1', u_2', v_1', v_2'\}$ from the rectangle, we claim that the degree of $p$ can be reduced by 2 each time, until it is equal to or less than 4.
\begin{enumerate}
  \item If deg$(p)=s\geq6$ and $p$ is colored by 0, then the neighbors of it, say $q_1, \cdots, q_s$, all have color 1. Deleting the edges $pq_3$ and $pq_4$ yields a new triangulated polygon $T'$. It is easy to observe that $u_0'$ and $v_0'$ are still compatible and $N(T)-N(T')=3$. See Figure \ref{figure7}.
  \begin{figure}[h]
  \centering
  \includegraphics{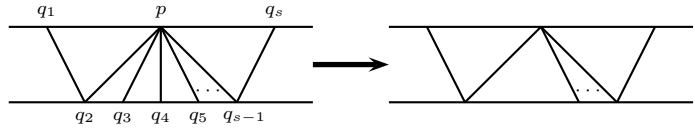}\\
  \caption{Deleting two edges}\label{figure7}
  \end{figure}
  \item If deg$(p)=s\geq6$ and $p$ is colored by 1, then the neighbors $q_1, \cdots, q_s$ must be colored by 0 and 1 alternatively. In this case, we can still delete the edges $pq_3$ and $pq_4$ to obtain a new triangulated polygon $T'$. Now $u_0'$ and $v_0'$ are still compatible. Recall that the choice of the first odd vertex $o_1$ is not important,  by choosing $q_3$ as the first odd vertex, it is not difficult to find that the equation $N(T)-N(T')=3$ also holds in this case.
  \item If deg$(p)=5$, let us use $q_1, \cdots, q_5$ to denote the neighbors of $p$. A key point here is, no matter what color does $p$ have, $q_1, q_5$ receive the same color and $q_2, q_4$ receive the same color. By contracting the edges $q_2q_3$ and $q_3q_4$, we identify the edges $pq_2, pq_4$ with $pq_3$ and obtain a new triangulated polygon $T'$, in which $p$ has three neighbors $q_1, q_3', q_5$. Now $q_3'$ receives the same color as $q_2$ and $q_4$, which guarantees that $u_0'$ and $v_0'$ are still compatible. Next, we need to show that $3|N(T)-N(T')$. We continue the discussion in the following three situations, see Figure \ref{figure8}.
      \begin{enumerate}
        \item[(3.1)] Deg$(q_2)$ and deg$(q_4)$ are both odd. In this case, deg$(q_3')$ is also odd, since $q_3$ is an odd vertex. By choosing $q_2$ as $o_{2k}$ in $T$ (hence $q_3=o_1$ and $q_4=o_2$) and $q_3'$ as $o_{2k-2}$ in $T'$, it is easy to find that $N(T)-N(T')=3$;
        \item[(3.2)] Deg$(q_2)$ is even and deg$(q_4)$ is odd. In this case, $q_3'$ is an even vertex. In $T$, we choose $q_3$ as the first odd vertex $o_1$, and choose the corresponding vertex of $o_3$ in $T'$ as the first vertex. Now we also have $N(T)-N(T')=3$. The case that deg$(q_2)$ is odd and deg$(q_4)$ is even can be proved in a similar manner;
        \item[(3.3)] Deg$(q_2)$ and deg$(q_4)$ are both even. Just like $q_3$, in this case $q_3'$ is also an odd vertex. By choosing $q_3$ and $q_3'$ as the first odd vertex in $T$ and $T'$ respectively, we find that $N(T)-N(T')=3$.
      \end{enumerate}
  \begin{figure}[h]
  \centering
  \includegraphics{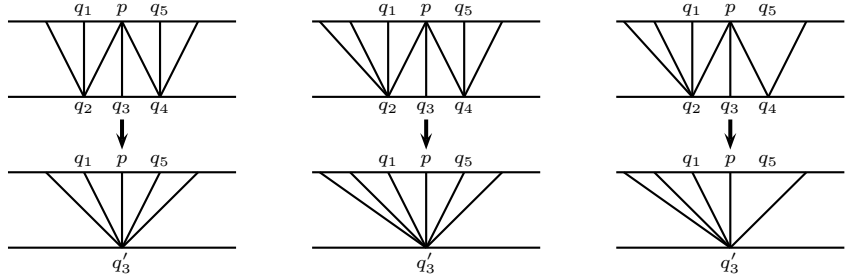}\\
  \caption{Contracting two edges}\label{figure8}
  \end{figure}
\end{enumerate}

If $p\in\{u_1', u_2', v_1', v_2'\}$, we can use the same argument to reduce the degree of it, until it is equal to 4 or 5. As a consequence, it is sufficient to check the following case, see Figure \ref{figure9}.
\begin{figure}[h]
\centering
\includegraphics{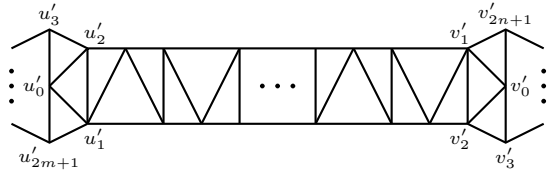}\\
\caption{Two compatible odd vertices connected by a sequence of triangles}\label{figure9}
\end{figure}

Since $u_0'$ are $v_0'$ are compatible, the number of triangles in the rectangle connecting the two polygons has the form $3l$ for some $l\geq0$. It follows that if we choose $u_3'$ as the first odd vertex, then we have $N(T)=2m+2n+3l+m+n=3(m+n+l)$. The proof is completed.
\end{proof}

\section*{Acknowledgement}
Zhiyun Cheng is supported by NSFC 12071034.

\end{document}